\documentclass[10pt,reqno]{amsart}

\pdftrailerid{}
\usepackage[T1]{fontenc}
\usepackage{lmodern}
\usepackage[margin=1.08in]{geometry}
\usepackage{microtype}
\usepackage{amsmath,amssymb,mathtools}
\usepackage{enumitem}
\usepackage[hidelinks]{hyperref}
\hypersetup{
  pdftitle={The Gross Property for Implicit Functions},
  pdfauthor={Sina Nadi},
  pdfsubject={The Gross Property for Implicit Functions},
  pdfcreator={},
  pdfproducer={},
  pdfkeywords={},
}

\setlist[enumerate]{label=(\roman*),leftmargin=2.15em,itemsep=0.15em,topsep=0.35em}
\setlist[itemize]{leftmargin=1.8em,itemsep=0.15em,topsep=0.35em}
\numberwithin{equation}{section}

\newtheorem{theorem}{Theorem}[section]
\newtheorem{lemma}[theorem]{Lemma}
\newtheorem{proposition}[theorem]{Proposition}

\newcommand{\C}{\mathbb C}
\newcommand{\D}{\mathbb D}
\newcommand{\T}{\mathbb T}
\newcommand{\Z}{\mathbb Z}
\newcommand{\eps}{\varepsilon}
\newcommand{\dist}{\operatorname{dist}}
\newcommand{\diam}{\operatorname{diam}}
\newcommand{\Real}[1]{\mathit{Re}\{#1\}}
\newcommand{\calM}{\mathcal M}
\newcommand{\calS}{\mathcal S}
\newcommand{\lemref}[1]{\hyperref[#1]{Lemma~\ref*{#1}}}
\newcommand{\propref}[1]{\hyperref[#1]{Proposition~\ref*{#1}}}
\newcommand{\thmref}[1]{\hyperref[#1]{Theorem~\ref*{#1}}}
\newcommand{\secref}[1]{\hyperref[#1]{Section~\ref*{#1}}}

\makeatletter
\def\@secnumfont{\bfseries}
\def\section{\@startsection{section}{1}%
  \z@{.7\linespacing\@plus\linespacing}{.5\linespacing}%
  {\normalfont\bfseries\centering}}
\makeatother

\title[The Gross Property for Implicit Functions]
{The Gross Property for Implicit Functions}
\author[Sina Nadi]{Sina Nadi}
\date{}

\begin{document}

\begin{abstract}
In 1918, Gross proved that a regular local inverse of a meromorphic
function in the plane can be continued analytically along every ray from
its centre except for directions in a set of Lebesgue measure zero.
Eremenko asked whether the same conclusion holds for an implicit function
defined by an entire relation in two variables. We prove that there exist
an entire function $F$ of two variables and a regular implicit germ
$\varphi$, defined by $F(z,\varphi(z))=0$, whose analytic continuation
fails on every ray from its centre. In fact, along each ray, the modulus
of the continuation tends to infinity as the singular point is approached.
\end{abstract}

\maketitle

\section{Introduction}\label{sec:introduction}

For $z\in\C$ and $r>0$, we write
$\Delta(z,r)=\{w\in\C:|w-z|<r\}$, $\D=\Delta(0,1)$, and
$\T=\partial\D$. The space of holomorphic functions on a complex manifold
$X$ is denoted by $\mathcal O(X)$. We also put $\C^*=\C\setminus\{0\}$.
Let $F\in\mathcal O(\C^2)$ and suppose that
\[
 F(z_0,w_0)=0,
 \qquad F_w(z_0,w_0)\ne0.
\]
The implicit function theorem gives a unique holomorphic germ $\varphi$
at $z_0$ that satisfies
\[
 F(z,\varphi(z))=0,
 \qquad \varphi(z_0)=w_0.
\]
We call $\varphi$ a regular implicit germ of $F$. We use the same symbol
for the germ and its analytic continuation along a fixed path.
We say that $\varphi$ has the \emph{Gross property} if it admits analytic
continuation along
\[
 \{z_0+t e^{i\theta}:t\geq0\}
\]
for almost every $\theta\in[0,2\pi)$ with respect to Lebesgue measure.

In 1918, Gross proved that a regular local inverse of a meromorphic
function in the plane can be continued analytically along almost every
ray from its centre \cite{Gross1918}. In particular, if $f$ is entire and
$F(z,w)=z-f(w)$, then the implicit germ $\varphi$ has the Gross property.

In 1936, Sto\"ilow proved that implicit functions defined by entire
relations have the Iversen property \cite{Stoilov1936}, which means that
for every continuous path $\gamma\colon[0,1]\to\C$ with $\gamma(0)=z_0$
and every $\eps>0$, there
is a continuous path $\gamma_1\colon[0,1]\to\C$ with $\gamma_1(0)=z_0$
such that
\[
 |\gamma(t)-\gamma_1(t)|\leq\eps
 \qquad(0\leq t\leq1),
\]
and $\varphi$ admits analytic continuation along $\gamma_1$
\cite{Eremenko2015}. This extends Iversen's theorem for inverse functions
of meromorphic functions in the plane \cite[pp.~24--25]{Iversen1914}.

Eremenko posed the following problem \cite{Eremenko2015}.

\medskip
\noindent\textbf{Eremenko's problem.}
Does every regular implicit germ defined by an entire relation in two
variables admit analytic continuation along almost every ray from its
centre?
\medskip

We prove that there is a regular implicit germ whose analytic
continuation fails on every ray from its centre.

We call an entire function reduced if its divisor of zeros has
multiplicity one along every irreducible component.

\begin{theorem}\label{thm:main}
There exist a reduced entire function $F\in\mathcal O(\C^2)$ and a point
$(z_0,w_0)\in\C^2$ such that
\[
 F(z_0,w_0)=0,
 \qquad F_w(z_0,w_0)\ne0.
\]
Let $\varphi$ be the corresponding implicit germ at $z_0$ with
$\varphi(z_0)=w_0$. For every $\theta\in[0,2\pi)$, there is
$0<R_\theta<\infty$ such that $\varphi$ admits analytic continuation along
$z_0+t e^{i\theta}$ for $0\leq t<R_\theta$, and
\[
 |\varphi(z_0+t e^{i\theta})|\longrightarrow+\infty
 \qquad(t\to R_\theta^-).
\]
\end{theorem}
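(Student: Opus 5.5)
The plan is to build $F$ from a closed complex curve $X\subset\C^2$ rather than from a formula. The Gross property fails only if the lift of every ray to $X$ runs off to $w=\infty$ over a finite point of the $z$-line. Once such a curve is in hand, the rest is formal. A closed one-dimensional analytic subset $X\subset\C^2$ is the zero set of a reduced entire function $F$, by the solvability of the second Cousin problem on $\C^2$ (since $H^2(\C^2,\Z)=0$); $F$ then generates the ideal sheaf of $X$. At any smooth point $(z_0,w_0)$ of $X$ where $X$ is locally a graph $w=\varphi(z)$, we have $F=(w-\varphi(z))\,u$ with $u$ a local unit, so $F_w(z_0,w_0)=u(z_0,w_0)\ne0$. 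Analytic continuation of the germ $\varphi$ along a path $\gamma$ from $z_0$ coincides with lifting $\gamma$ to $X$ through points where the projection $\pi(z,w)=z$ is locally biholomorphic. It is unique whenever it exists. So it suffices to produce $X$ and a point $p_0=(z_0,w_0)\in X$ with the following property: for every $\theta$, the $\pi$-lift of $t\mapsto z_0+te^{i\theta}$ starting at $p_0$ exists on some $[0,R_\theta)$ with $R_\theta<\infty$ and has $|w|\to\infty$ as $t\to R_\theta^-$.

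One cannot simply take $X$ to be the graph of a $\varphi$ on a bounded domain $\Omega$ with $\varphi\to\infty$ at $\partial\Omega$. Since $1/\varphi$ would be meromorphic with finitely many poles and boundary values $0$, the maximum principle would force $1/\varphi\equiv0$. Hence the divergence can only be radial, and the closure of the graph must be allowed to contain other parts of $X$. My plan is therefore:
\begin{enumerate}
\item Fix $z_0=0$ and construct a holomorphic function $\varphi$ on $\D$ whose radial limit along every radius is $\infty$. I would obtain it by Arakelian approximation on an exhaustion of $\D$ by closed sets whose complements in the Riemann sphere minus $\T$ are connected and locally connected at $\T$. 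Concretely, I would prescribe very large values on a sequence of closed annular sectors that together cover every radius near $\T$, leaving thin spiralling gaps so that the approximation sets remain admissible.
\item Regard the graph $\Gamma=\{(z,\varphi(z)):z\in\D\}$ as an embedded disc. It is closed in $\D\times\C$ but not in $\C^2$.
\item Enlarge the bordered piece $\overline{\Gamma_r}=\{(z,\varphi(z)):|z|\le r\}$ inductively, $r\uparrow1$, to a properly embedded open Riemann surface $X$. I would use a Forstneri\v{c}--Wold/Alarc\'on type exposing-and-pushing scheme: at stage $n$, approximate the current embedding on the compact piece already built, keep the $z$-coordinate of $\Gamma_{r_n}$ essentially unchanged, and push everything newly attached outside the ball of radius $n$ in $\C^2$.
\item Check that the radial lifts in the limit surface still leave every compact set of $\C^2$ as $t\to R_\theta\le1$. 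This follows because the radial growth of $\varphi$ is uniform on the prescribed sectors, and the approximations are summably small in the relevant sup norms.
\end{enumerate}

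The main obstacle is step (iii) combined with (iv). Each approximation step perturbs the curve near $\T\times\C$, and the perturbation must preserve two things. First, the graph structure over each radial segment up to the point where $|w|$ exceeds the current threshold. Second, the properness of $X$, so that the closure of $\Gamma$ does not create non-analytic accumulation. I would first try to work in the Stein manifold $\D\times\C$: there $\Gamma$ is already properly embedded. The aim is to build a holomorphic automorphism-like map of $\C^2$, or a Fatou--Bieberbach-type embedding of a neighbourhood of $\overline{\D}\times\C$, that is close to the identity on $\{|z|\le r_n,\ |w|\le n\}$ and sends the parts of $\Gamma$ near $\T$ where $|\varphi|$ is still moderate off to infinity in the $w$-direction. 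Since the first coordinate is only approximated, the radii become slightly bent curves. I would absorb this by choosing $R_\theta$ as the first exit parameter and using the uniform radial divergence from step (i) to absorb the errors.
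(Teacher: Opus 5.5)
Your reduction is sound and matches how the paper finishes. A closed analytic curve in $\C^2$ is the zero set of a reduced entire $F$, $F_w\ne0$ at a smooth point where the curve is a graph, and continuing $\varphi$ along a ray is the same as lifting the ray. The gap is in steps (i)--(iv), and it is fatal rather than technical.

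If $\varphi\in\mathcal O(\D)$ tends to $\infty$ along every radius, then its cluster set at every $\zeta_0\in\T$ is the whole Riemann sphere. Suppose instead that for some $c\in\C$ and $r,\delta>0$ the values of $\varphi$ on $\D\cap\Delta(\zeta_0,r)$ stay at distance at least $\delta$ from $c$. Then $1/(\varphi-c)$ is bounded there. It has limit $0$ along the radius to every point of an arc of $\T$. So it vanishes identically by Privalov's uniqueness theorem, a contradiction. Hence $\overline\Gamma\supset\T\times\C$. An entire $F$ vanishing on $\Gamma$ then vanishes on $\T\times\C$, so $F(\cdot,w)\equiv0$ for every $w$. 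No closed analytic curve contains $\Gamma$: the ``other parts of $X$'' would have to include a vertical line over every point of $\T$.

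The perturbation in (iii)--(iv) does not remove this obstruction. Suppose, as (iv) asserts, the final branch $\psi$ is holomorphic on a slightly deformed disc $D$ and blows up along bent radii ending on a subset of $\partial D$ of positive harmonic measure. Then the same argument applies, using a Riemann map of $D$, Lindel\"of's theorem and Privalov's theorem. It gives $\{a\}\times\C\subset X$ for uncountably many $a$. But a closed analytic curve has only countably many irreducible components.

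If instead the pushing genuinely changes the picture near every radius, then approximation on compacta gives no control of the lifts near $\T$. The pushing must do so, since $\varphi$ takes bounded values arbitrarily close to every boundary point. Moreover, a proper map $\D\to\C^2$ cannot have a bounded first coordinate, so the $z$-coordinate cannot stay uniformly close to $z$.

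The missing idea is that the endpoints of the blocked rays must be invisible to harmonic measure from the domain of the single-valued continuation. The paper arranges this with Stephenson-type corridors. The lifted rays pass through infinitely many generations of thresholds and exits, which together have harmonic measure less than $1/4$ as seen from the middle crosscuts. Consequently almost every Brownian path meets only finitely many generations. This makes $I$ inner and holomorphic across an open set $U\subset\T$ of full measure, and the lifted rays accumulate only on the null set $\T\setminus U$. There a Fatou peak function produces the barrier $H$ with $\Real{H}\to+\infty$. Properness then comes from the proper disc $(p,s)$. Your plan has no substitute for this step.
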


Thus the exceptional set is the whole circle of directions.

In 1988, Stephenson constructed an inner function with a local inverse
that cannot be continued analytically along any ray from its centre
\cite{Stephenson1988}. We use a similar construction in
\propref{prop:blocking}.

In 1975, Alexander proved that a universal covering of $\C\setminus E$,
where $E$ is closed, has logarithmic capacity zero, and contains at least
two points, can be the first coordinate of a proper holomorphic map from
$\D$ to $\C^2$ \cite[p.~327]{Alexander1975}. Here we construct a proper
holomorphic immersion whose first coordinate has a local inverse that
fails to continue through a finite point on every ray from its centre,
and the second coordinate tends to infinity in modulus along the
corresponding lifts.

In \secref{sec:blocking}, we construct the Riemann surface and study its
boundary. In \secref{sec:barrier}, we use a peak function for the disc
algebra to construct a holomorphic function.
This function is used in \secref{sec:completion} to obtain a proper
holomorphic immersion into $\C^2$. We prove \thmref{thm:main} in
\secref{sec:main-proof}.

\section{Construction of a Riemann domain}\label{sec:blocking}

We denote normalized arclength measure on $\T$ by $m$ and write
$\omega(z,E,\Omega)$ for harmonic measure in a plane domain $\Omega$.
See \cite{GarnettMarshall2005}.
The notation $A\Subset B$ means that the closure of $A$ is a compact
subset of the interior of $B$.

A \emph{Riemann domain over $\D$} will mean a connected Riemann surface
$X$ together with a locally biholomorphic map $\pi_X\colon X\to\D$.
For the covering-space terminology used below, see
\cite[Chapter~1]{Forster1981}. If $X$ is simply connected and
$\pi_X$ is nonconstant, then $X$ is conformally equivalent to $\D$.
Indeed, $X$ is noncompact, and a uniformization by $\C$ would make
$\pi_X$ a bounded nonconstant entire function. When a uniformization
$\Psi\colon\D\to X$ is fixed, boundary sets of $X$ will be represented on
$\T$ through $\Psi$. We use this representation in all harmonic measure
statements below.

We begin with a harmonic measure estimate.

\begin{lemma}\label{lem:small-gates}
Let $E_1,E_2\subset\D$ be disjoint nonempty compact connected sets such
that $\D\setminus(E_1\cup E_2)$ is connected, and let $M$ be a nonempty
compact subset of this domain. Suppose that
\[
 \diam E_j\leq\delta,
 \qquad \dist(M,E_j)\geq d>0,
 \qquad j=1,2.
\]
If $0<\delta<1$, then
\begin{equation}\label{eq:small-gate-bound}
 \sup_{w\in M}
 \omega\bigl(w,E_1\cup E_2,\D\setminus(E_1\cup E_2)\bigr)
 \leq
 \frac{2\log(3/d)}{\log(3/\delta)}.
\end{equation}
In particular, the left-hand side tends to zero as $\delta\to0$,
uniformly over all configurations that satisfy the displayed bounds.
\end{lemma}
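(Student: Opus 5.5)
We bound the harmonic measure of $E_1\cup E_2$ by the sum of the harmonic measures of each $E_j$ separately, and control each of these with a logarithmic barrier centred at a point of $E_j$. Write $\Omega=\D\setminus(E_1\cup E_2)$ and $\omega_j(w)=\omega(w,E_j,\Omega)$. By additivity of harmonic measure, $\omega(w,E_1\cup E_2,\Omega)=\omega_1(w)+\omega_2(w)$. It therefore suffices to prove, for each $j$,
\[
 \omega_j(w)\leq \frac{\log(3/d)}{\log(3/\delta)}\qquad (w\in M).
\]

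Fix $j$ and a point $a\in E_j$, and set
\[
 u(w)=\frac{\log\bigl(3/|w-a|\bigr)}{\log(3/\delta)}.
\]
This function $u$ is harmonic on $\C\setminus\{a\}$, hence on $\Omega$. We compare $u$ with $\omega_j$ on the boundary. Since $|w-a|<2<3$ for $w\in\overline\D$, we have $u>0$ on $\overline{\D}\setminus\{a\}$. On $E_j$ we have $|w-a|\leq\diam E_j\leq\delta$, so $u\geq1$ there. Thus $u\ge\omega_j$ in the boundary sense: on $E_j$ the boundary value of $\omega_j$ is at most $1\le u$, and on $\T\cup E_{k}$ with $k\neq j$ the boundary value of $\omega_j$ is $0<u$.

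To make this comparison rigorous I would use the Perron definition: $\omega_j$ is the upper envelope of subharmonic functions $v$ on $\Omega$ with $\limsup v\le \chi_{E_j}$ at each boundary point. For any such $v$, the function $v-u$ is subharmonic on $\Omega$ and has $\limsup\le 0$ at every boundary point. This holds because $u$ is continuous and positive on $\overline\Omega\setminus\{a\}$, and $u\to+\infty$ at $a$. The maximum principle then gives $v\le u$, hence $\omega_j\le u$. This is the only step needing care, and the main point is that no regularity of $\partial\Omega$ is needed. The hypotheses that $E_j$ is connected and $\Omega$ is connected mainly serve to make $\omega$ a well-defined harmonic measure on a domain. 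The bound itself uses only $\diam E_j\le\delta$.

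Finally, for $w\in M$ we have $|w-a|\geq\dist(M,E_j)\ge d$. Hence
\[
 \omega_j(w)\le u(w)\le \frac{\log(3/d)}{\log(3/\delta)}.
\]
Summing over $j=1,2$ gives \eqref{eq:small-gate-bound}. Note that $d\le 2$, so $\log(3/d)>0$. Since the right-hand side depends only on $d$ and $\delta$ and tends to $0$ as $\delta\to0$ with $d$ fixed, the uniformity statement follows.
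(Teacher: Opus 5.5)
Your proof is correct and follows essentially the paper's argument: the same barrier $\log(3/|z-\xi_j|)/\log(3/\delta)$ centred at a point of each $E_j$, compared with harmonic measure via the maximum principle. The only difference is cosmetic: you first split $\omega$ as $\omega_1+\omega_2$ and bound each term, whereas the paper compares $\omega$ directly with $b_1+b_2$, which avoids needing additivity.
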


\begin{proof}
Choose $\xi_j\in E_j$. Since
$E_j\subseteq\overline{\Delta(\xi_j,\delta)}$, the function
\[
 b_j(z)=\frac{\log(3/|z-\xi_j|)}{\log(3/\delta)}
\]
is positive and harmonic on $\D\setminus\{\xi_j\}$. It is at least one
on $E_j\setminus\{\xi_j\}$, tends to $+\infty$ as $z\to\xi_j$, and is
nonnegative on $\T$. If
\[
 h(z)=\omega\bigl(z,E_1\cup E_2,
                  \D\setminus(E_1\cup E_2)\bigr),
\]
then the maximum principle gives $h\leq b_1+b_2$. For $w\in M$ we
have $|w-\xi_j|\geq d$, and \eqref{eq:small-gate-bound} follows.
\end{proof}

We next describe the cutting operation used in the construction. The
geometric terminology follows Stephenson \cite{Stephenson1988}.
By a bank of a cut, we mean one of the two boundary copies of the cut.

\begin{lemma}\label{lem:wing}
Let $\sigma\subset\D$ be a simple analytic arc with distinct endpoints
$b,c\in\D$, and let
\[
 q\colon X\longrightarrow\D\setminus\{b,c\}
\]
be a universal covering. If $\widetilde\sigma$ is a lift of
$\sigma\setminus\{b,c\}$, then $\widetilde\sigma$ is a properly
embedded copy of $\mathbb R$ in $X$. Cutting $X$ along
$\widetilde\sigma$ gives two simply connected surfaces with boundary,
and the covering projection supplies holomorphic coordinates across either
bank of the cut.

Let $J$ be an open arc of $\T$, and let $C_J\subset\D$ be a simply
connected Jordan domain whose boundary contains $J$ and whose closure in
$\overline\D$ avoids $b$ and $c$. Every component of $q^{-1}(C_J)$ is
mapped biholomorphically onto $C_J$ and has an analytic boundary arc
that projects onto $J$.
\end{lemma}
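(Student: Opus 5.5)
The plan is to work in the simply connected surface $X$ and use covering-space lifting throughout. First I show that $\widetilde\sigma$ is a properly embedded line. Since $b\ne c$ and $\sigma$ is a simple arc, $\sigma\setminus\{b,c\}$ is an open arc in $\D\setminus\{b,c\}$ homeomorphic to $\mathbb R$. It is contractible, so $q^{-1}(\sigma\setminus\{b,c\})$ is a disjoint union of sheets, and $q$ maps each sheet homeomorphically onto $\sigma\setminus\{b,c\}$. A lift $\widetilde\sigma$ is one such sheet, and it is the image of a section $s$ over the arc. The union of all sheets is the preimage of a set that is closed in $\D\setminus\{b,c\}$, so it is closed in $X$. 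The sheets are locally finite, because over an evenly covered disc meeting $\sigma$ each sheet of the covering contains at most one local piece. Hence each sheet is closed, and with the topology it inherits as a line, $\widetilde\sigma$ is properly embedded. Since $\sigma$ is analytic, $\widetilde\sigma$ is an analytic embedded line.

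Next I show that cutting along $\widetilde\sigma$ gives two simply connected pieces. I uniformize $X\cong\D$, which is possible because $\D\setminus\{b,c\}$ is hyperbolic. A properly embedded line in a surface homeomorphic to the plane separates it into exactly two components, and each closure is homeomorphic to a closed half-plane. This is the Jordan–Schoenflies theorem for proper lines: compactify $X$ to $S^2$ by one point, so that $\widetilde\sigma\cup\{\infty\}$ becomes a Jordan curve. Each closed half is therefore simply connected.

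For the holomorphic coordinates across a bank, take $p\in\widetilde\sigma$ and a disc $U\ni q(p)$ evenly covered by $q$. The arc $\sigma$ splits $U$ into two analytic half-discs. On the sheet through $p$, $q$ is a biholomorphism, so each side of the cut near $p$ is mapped by $q$ biholomorphically onto a half-disc bounded by an analytic arc. Hence $q$ is a holomorphic boundary coordinate, and each bank is an analytic boundary curve.

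For the second part, $C_J$ is a Jordan domain, hence simply connected, and it lies in $\D\setminus\{b,c\}$ because its closure avoids $b$ and $c$. The restriction of $q$ to $q^{-1}(C_J)$ is a covering of $C_J$. A connected covering of a simply connected, locally path-connected space is trivial, so each component $V$ is mapped homeomorphically, and hence biholomorphically, onto $C_J$. The component $V$ is the image of a holomorphic section $s\colon C_J\to X$. To speak of the boundary arc over $J$, I realize $V$ inside $X$ with the ideal boundary corresponding to $J$. The composition $s\colon C_J\to V$ is a conformal map. The part of $\partial C_J$ that is the open arc $J$ is an analytic arc, since it lies in $\T$. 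By Schwarz reflection this arc corresponds, in the prime-end or Carathéodory boundary of $V$, to an analytic free boundary arc, and $q$ extends across it holomorphically, by the identity in the coordinate $q$. Because $\overline{C_J}$ avoids $b$ and $c$, there is no monodromy issue near $J$. The resulting arc projects onto $J$.

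The main obstacle is the precise meaning of the boundary arc of $V$. It is not contained in $X$, since $J\subset\T$ lies outside $\D$. I will interpret it as the boundary arc of $V$ in the bordered surface obtained by adjoining to $X$ its ideal boundary over $\T$. Concretely, near $J$ the covering $q$ is trivial over a collar neighbourhood of $J$ in $\overline{C_J}$, so gluing $J$ to $V$ through $s$ produces the analytic boundary arc. I would present this through the section $s$ to avoid technicalities.
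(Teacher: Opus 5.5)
Your proposal is correct and follows essentially the same route as the paper. The lift is a sheet of the trivial covering over the contractible open arc. The Schoenflies theorem after one-point compactification splits $X$ into two half-planes. The projection gives the bank charts. Triviality of the covering over the simply connected $C_J$ gives one-sheeted components, to which the lift of $J$ is adjoined.

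You supply more detail than the paper on why the lift is closed and proper. The detour through prime ends and Schwarz reflection is unnecessary: adjoining $J$ through the section $s=(q|_V)^{-1}$, as in your last paragraph, already gives the analytic boundary arc with $q$ as coordinate.
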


\begin{proof}
The arc $\sigma\setminus\{b,c\}$ is a proper embedding of
$\mathbb R$ in $\D\setminus\{b,c\}$, and each lift is proper and
embedded. The universal covering surface $X$ is homeomorphic to the
plane. The Schoenflies theorem, applied after one-point
compactification, shows that $X\setminus\widetilde\sigma$ has two
components, each homeomorphic to a half-plane.
Therefore the first assertion follows. Near the cut, the covering
projection itself is a local coordinate.

Since $C_J$ is simply connected and avoids the punctures, the restriction
of $q$ to each component of $q^{-1}(C_J)$ is a one-sheeted covering,
hence a biholomorphism. Adjoining the lift of $J$ gives the stated
analytic boundary arc.
\end{proof}

We call a component of $q^{-1}(C_J)$, with its boundary arc over $J$
adjoined, a \emph{lifted collar over $J$}.

By a crosscut of a domain, we mean a simple open arc in the domain whose
closure is a closed arc with two distinct endpoints on its boundary.

A corridor is a Jordan domain $Q\Subset\D$ whose boundary consists of
four analytic arcs. We denote the relative interiors of these arcs, in
cyclic order, by
\[
 \tau,\ \sigma_1,\ \xi,\ \sigma_0.
\]
Following Stephenson, we call $\tau$ the \emph{threshold}, $\xi$ the
\emph{exit}, and $\sigma_0,\sigma_1$ the lateral sides. Let
$\Gamma\subset\D$ be a Jordan curve disjoint from
$\overline\tau\cup\overline\xi$ such that the threshold and exit lie in
different components of $\D\setminus\Gamma$. In the application,
$\Gamma$ is a level curve of the prescribed local coordinate, and
$Q\cap\Gamma$ is the middle crosscut of the corridor.

\begin{lemma}\label{lem:assembly}
Let $Q$ and $\Gamma$ be defined as above, and put
\[
 E=\overline\tau\cup\overline\xi.
\]
There is a simply connected Riemann domain
$\pi_Q\colon A_Q\to\D$ with the following properties.

\begin{enumerate}
\item The domain $Q$ is contained in $A_Q$, and $\pi_Q$ is the
identity on $Q$. The arcs $\tau$ and $\xi$ can be adjoined as disjoint
analytic boundary arcs, with their endpoints omitted.

\item The closed set
\begin{equation}\label{eq:assembly-middle-set}
 \mathcal M_Q=\pi_Q^{-1}(\Gamma)
\end{equation}
separates the threshold from the exit. There are collars of the threshold
and exit such that every path in $A_Q$ joining these collars meets
$\mathcal M_Q$.

\item Fix a conformal map $\Psi\colon\D\to A_Q$. The threshold and exit
correspond to open arcs $T_\tau,T_\xi\subset\T$. There is a measurable
set
\[
 B_Q\subseteq\T\setminus(T_\tau\cup T_\xi)
\]
such that
\[
 m\bigl((\T\setminus(T_\tau\cup T_\xi))\setminus B_Q\bigr)=0,
\]
and, for every $\zeta\in B_Q$, the curve $\Psi(r\zeta)$ eventually
lies in a lifted collar over an arc of $\T$. If $x\in\mathcal M_Q$ and
$w=\pi_Q(x)$, then
\begin{equation}\label{eq:assembly-comparison}
 \omega\bigl(\Psi^{-1}(x),B_Q,\D\bigr)
 \geq \omega(w,\T,\D\setminus E).
\end{equation}
\end{enumerate}
\end{lemma}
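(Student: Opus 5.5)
We build $A_Q$ by Stephenson's wing construction. We glue to $Q$, along each lateral side $\sigma_j$ ($j=0,1$), a \emph{wing}. Let $b_j,c_j$ be the endpoints of $\overline\sigma_j$; these are corners of $Q$ and lie in $\D$ because $Q\Subset\D$. Let $q_j\colon X_j\to\D\setminus\{b_j,c_j\}$ be a universal covering, and let $\widetilde\sigma_j$ be a lift of $\sigma_j$. By \lemref{lem:wing}, cutting $X_j$ along $\widetilde\sigma_j$ gives two simply connected half-plane pieces. Let $W_j$ be the piece lying on the side opposite to $Q$ near the cut, i.e.\ the side that locally projects to $\D\setminus\overline Q$ near $\sigma_j$.

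Set $A_Q=Q\cup\sigma_0\cup\sigma_1\cup W_0\cup W_1$, glued along $\sigma_j\cong\widetilde\sigma_j$ using the projection coordinates from \lemref{lem:wing}. Put $\pi_Q=\mathrm{id}$ on $Q$ and $\pi_Q=q_j$ on $W_j$. This is locally biholomorphic, including across the seams, since the banks carry coordinates given by the projection. The result is simply connected, because it is a union of three simply connected pieces glued along connected arcs in a tree pattern. Nothing is glued along $\tau$ or $\xi$, so (i) holds.

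For (ii), a path in $A_Q$ from a collar of $\tau$ to a collar of $\xi$ projects to a path in $\D$. The endpoint collars can be chosen to project into the two different components of $\D\setminus\Gamma$, using that $\Gamma$ avoids $\overline\tau\cup\overline\xi$ and separates them. The projected path must therefore cross $\Gamma$, so the lifted path meets $\pi_Q^{-1}(\Gamma)$.

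For (iii), fix $\Psi$. The ideal boundary of $A_Q$ outside $T_\tau\cup T_\xi$ consists of the boundary of each wing over $\T$ together with the ideal points over the punctures $b_j,c_j$, which are approached through infinitely many sheets. Each wing $W_j$ is a half-plane, i.e.\ it is conformally a disc with boundary split into the cut and the ideal boundary. Take $B_Q$ to be the set of $\zeta$ outside $T_\tau\cup T_\xi$ at which $\Psi$ has a radial limit lying on a lifted collar arc over $\T$.

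1. Where no such limit exists, the radial curve tends to the ideal points over the punctures. Since $\pi_Q\circ\Psi$ is bounded, Fatou's theorem gives radial limits a.e. A single point $b_j$ has zero harmonic measure (log capacity zero), and the lifted collars exhaust the preimage of a neighbourhood of $\T$ minus neighbourhoods of the punctures. Hence a.e.\ radius ends in a lifted collar, and $m$ of the complement of $B_Q$ is zero. This is where Lindelöf's theorem and polarity of points are needed.

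2. For the comparison \eqref{eq:assembly-comparison}, let $x\in\mathcal M_Q$ with $w=\pi_Q(x)$. Define $u(y)=\omega(\Psi^{-1}(y),B_Q,\D)$ on $A_Q$ and $v(y)=\omega(\pi_Q(y),\T,\D\setminus E)$. Then $v$ is bounded and harmonic wherever $\pi_Q(y)\notin E$, and it is subharmonic (setting $v=0$ on $E$) after extending over the lifts.

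3. We show $u\ge v$ by the maximum principle for $v-u$ on $A_Q$, which pulls back to $\D$:
\begin{itemize}
\item At boundary points in $T_\tau\cup T_\xi$, we have $\pi_Q\to E$, where $v=0\le u$.
\item At points of $B_Q$, $u$ has boundary value $1\ge v$.
\item The remaining boundary set has measure zero; since $v-u$ is bounded, it is ignored by the extended maximum principle.
\end{itemize}

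The subtlety is that lifts of $E$ inside the wings are not boundary of $A_Q$. There, however, $v$ drops to $0$, so $v$, defined as $\omega$ composed with the projection and set to $0$ on preimages of $E$, is subharmonic on all of $A_Q$. The maximum principle therefore still applies.

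I expect the main obstacle to be step 1: showing carefully that almost every radius of $\Psi$ eventually enters a single lifted collar, rather than oscillating among sheets near the punctures or near the corners where the wings meet $\tau$ and $\xi$.
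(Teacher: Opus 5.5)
Your construction of the assembly, the simple-connectivity argument, part (ii), and the comparison in your step 3 all agree with the paper. In step 3 you pull back $\widehat u\circ\pi_Q$ and compare it with $\omega(\cdot,B_Q,\D)$ by a maximum principle that ignores a closed null set. The paper only splits this into two stages. It first compares against $\omega(\cdot,\T\setminus(T_\tau\cup T_\xi),\D)$, where just the finitely many arc endpoints are exceptional, and then replaces that set by $B_Q$.

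The genuine gap is your step 1, which carries the real content of (iii). The sentence ``where no such limit exists, the radial curve tends to the ideal points over the punctures'' is asserted, not proved. The remark that lifted collars exhaust a neighbourhood of $\T$ does not address it.

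What is missing is a local argument at interior limit values. Suppose $f=\pi_Q\circ\Psi$ has radial limit $\lambda\in\D$ at some $\zeta\notin T_\tau\cup T_\xi$, and $\lambda$ is not a corner of $Q$. Take a small disc $B$ about $\lambda$ that avoids the corners and meets at most one side of $Q$. The tail of $\Psi(r\zeta)$ lies in one component $Y$ of $\pi_Q^{-1}(B)$. Because the gluing along $\sigma_0,\sigma_1$ completes the coordinate across the lateral sides, $Y$ is one of two things:
\begin{itemize}
\item a one-sheeted copy of $B$, so the tail has compact closure in $A_Q$, which is impossible;
\item a boundary chart on one side of $\tau$ or $\xi$, so $\zeta\in T_\tau\cup T_\xi$ by the boundary correspondence.
\end{itemize}
Only after this do Fatou's theorem and Privalov's theorem reduce matters to radii with $f^*(\zeta)\in\T$. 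Your polarity remark is an acceptable substitute for Privalov, provided it is phrased as $m\{\zeta: f^*(\zeta)=b_j\}=0$.

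For the radii with $f^*(\zeta)\in\T$ you also need an actual argument, in three steps.
\begin{enumerate}
\item The tail cannot cross the corridor or the glued banks, since these project into a compact subset of $\D$, so it stays in one wing.
\item It then eventually lies in a single component of $q_j^{-1}(C_J)$, for a collar $C_J$ about $f^*(\zeta)$ with $\overline{C_J}\cap\overline Q=\varnothing$.
\item The reflection argument puts $\zeta$ in the open arc of $\T$ corresponding to that lifted collar: the inverse of the uniformization extends conformally across an adjoined analytic boundary arc.
\end{enumerate}

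This boundary correspondence is also what produces $T_\tau$ and $T_\xi$ in the first place. It is also what makes $B_Q$ a countable union of open arcs (collars taken from a countable basis), hence open and measurable. Your step 3 uses both facts without establishing them.

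Finally, the oscillation among sheets near the punctures that you flag is not the difficulty. Radii whose projection tends to a corner form a null set however they wind.
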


\begin{proof}
For each lateral side $\sigma_j$, apply \lemref{lem:wing} to
$\overline{\sigma_j}$ and retain the component which projects locally to
the exterior of $Q$. Attach its cut bank to $\sigma_j$. We call the
attached surfaces wings and the resulting surface an assembly,
following \cite{Stephenson1988}.

The identifications are made in analytic collars, using the projection to
$\D$ as local coordinate. The quotient is therefore a Hausdorff Riemann
surface and the induced projection $\pi_Q$ is locally biholomorphic. The
corridor and the two wings are simply connected, and their incidence graph
is a tree. Van Kampen's theorem shows that $A_Q$ is simply connected.
If a path joins the threshold side to the exit side, its projection joins
the two components of $\D\setminus\Gamma$ containing those sides. The
projection must meet $\Gamma$, and the path therefore meets
$\mathcal M_Q$. Notice that $\mathcal M_Q$ is the full inverse image of
$\Gamma$ in the assembly. It includes any components in the wings as well
as the part in the corridor.

For (iii), let $\Omega=\D\setminus E$ and set
\[
 u(z)=\omega(z,\T,\Omega).
\]
Since $E$ consists of two disjoint analytic arcs, every boundary point
of $\Omega$ is regular for the Dirichlet problem. Hence $u$ extends
continuously to $\overline\Omega$, with boundary values zero on $E$
and one on $\T$. Extending $u$ by zero across $E$ gives a bounded
subharmonic function $\widehat u$ on $\D$.

In a boundary coordinate that straightens an adjoined analytic boundary
arc, the inverse of the uniformization is univalent. Its modulus tends
to one at each point of the arc. Otherwise, a sequence approaching that
point would have a subsequence that converges to an interior point of
the surface, contrary to the Hausdorff property after adjoining the arc.
The reflection principle therefore extends the inverse
conformally across the arc.

Put $f=\pi_Q\circ\Psi$. Applying this argument to the threshold and exit
gives disjoint open arcs $T_\tau,T_\xi\subset\T$ corresponding to them.
The function $\widehat u\circ f$ is bounded and subharmonic in $\D$.
Its boundary limit is zero on $T_\tau\cup T_\xi$, and its upper boundary limit is at
most one elsewhere. The maximum principle for bounded subharmonic
functions therefore gives
\begin{equation}\label{eq:assembly-perron}
 \omega\bigl(\Psi^{-1}(x),
       \T\setminus(T_\tau\cup T_\xi),\D\bigr)
 \geq u(w).
\end{equation}

It remains to determine the boundary values of $f$ outside
$T_\tau\cup T_\xi$. Let $S_Q$ be the set of projections of the four
omitted corners. Suppose that $\zeta\notin T_\tau\cup T_\xi$ and
$f(r\zeta)\to\lambda\in\D\setminus S_Q$. Choose concentric discs
$B'\Subset B\Subset\D\setminus S_Q$ about $\lambda$ so small that $B$
meets at most one side of the corridor, in a single analytic arc.
The tail of $\Psi(r\zeta)$ lies in one component $Y$ of
$\pi_Q^{-1}(B)$.

Over $B$, the covering pieces are one-sheeted. If $B$ meets a lateral
side, gluing the corresponding bank to the corridor completes the local
coordinate across that side. Thus either $Y$ is mapped biholomorphically
onto $B$, or it is a boundary chart on one side of a threshold or
exit arc. In the first case, $Y\cap\pi_Q^{-1}(\overline{B'})$ is compact
in $A_Q$, which is impossible because $\Psi(r\zeta)$ leaves every
compact subset as $r\to1$. In the second case, the local boundary
correspondence gives $\zeta\in T_\tau\cup T_\xi$, again a contradiction.
Consequently, every interior radial limit of $f$ outside these two arcs
belongs to $S_Q$.

By Fatou's theorem, the bounded analytic function $f$ has angular
limits almost everywhere, and these agree with its radial limits $f^*$.
For a fixed $\lambda\in S_Q$, Privalov's boundary uniqueness theorem
\cite[Corollary~6.14, p.~140]{Pommerenke1992}, applied to $f-\lambda$,
gives
\[
 m\{\zeta\in\T:f^*(\zeta)=\lambda\}=0.
\]
It follows that, for almost every
$\zeta\in\T\setminus(T_\tau\cup T_\xi)$, the projection of
$\Psi(r\zeta)$ tends to $\T$. The corridor and the two
attaching banks project into a compact subset of $\D$. Such a curve must
therefore eventually lie in one wing and then in a lifted collar as in
\lemref{lem:wing}. Choose the boundary arcs of the collars from a
countable basis of $\T$, with the closures of the collars disjoint from
$\overline Q$. The local boundary correspondence identifies each lifted
boundary arc with an open arc in $\T$. Since each covering has
countably many components over a fixed collar, these arcs form a
countable family. Let $B_Q$ be their union. Then $B_Q$ is measurable
and has full measure in $\T\setminus(T_\tau\cup T_\xi)$. Equation
\eqref{eq:assembly-comparison} now follows from
\eqref{eq:assembly-perron}.
\end{proof}

We now carry out a construction similar to Stephenson's
\cite{Stephenson1988} in a prescribed local holomorphic coordinate.

\begin{proposition}\label{prop:blocking}
Let $p\in\mathcal O(\D)$ be nonconstant, and let $a\in\D$ satisfy
$p'(a)\ne0$. There exist a locally univalent inner function
$I\colon\D\to\D$, $R_*>0$, a compact set $K\subset\D$, and an
open set $U\subset\T$ such that
\[
 I(0)=a,
 \qquad I'(0)\ne0,
 \qquad m(\T\setminus U)=0.
\]
Put $G=p\circ I$. The following assertions hold.

\begin{enumerate}
\item For every $\theta\in[0,2\pi)$ there are
$0<R_\theta\leq R_*$ and a curve
$\gamma_\theta\colon[0,R_\theta)\to\D$ such that
\begin{equation}\label{eq:blocking-lift}
 \gamma_\theta(0)=0,
 \qquad
 G(\gamma_\theta(t))=p(a)+t e^{i\theta}
 \quad(0\leq t<R_\theta).
\end{equation}
Moreover, $G'(\gamma_\theta(t))\ne0$ for $0\leq t<R_\theta$, the
curve $\gamma_\theta(t)$ leaves every compact subset of $\D$ as
$t\to R_\theta^-$, and
\[
 I(\gamma_\theta(t))\in K.
\]
Thus $\gamma_\theta$ gives the analytic continuation of the local inverse
of $G$ along this radial segment.

\item The function $I$ extends holomorphically across $U$, and
$|I|=1$ on $U$.
\end{enumerate}
\end{proposition}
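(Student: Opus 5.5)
We build a simply connected Riemann domain $\pi\colon Y\to\D$ by an infinite Stephenson-type construction carried out in the local coordinate $p$. The map $I$ will be $\pi\circ\Psi$ for a uniformization $\Psi\colon\D\to Y$ with $\Psi(0)$ lying over $a$. Since $p'(a)\ne0$, we choose a small disc $D_0\ni a$ on which $p$ is univalent, and we work with the coordinate $\zeta=p(z)-p(a)$ near $a$. The rays $p(a)+te^{i\theta}$ then correspond to the radial segments of $p|_{D_0}$.

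\textbf{Step 1: the first gate.} We choose radii $0<r_1<R_*$ with $p(D_0)\supset\Delta(p(a),R_*)$. On the circle $\Gamma_1=(p|_{D_0})^{-1}\{|\zeta|=r_1\}$ we place finitely many corridors $Q$ as in \lemref{lem:assembly}. Each corridor straddles a level curve $\Gamma$ of $|p-p(a)|$, and together the corridors cover all directions. The remaining arcs of $\Gamma_1$ are closed off by small slits, which are the ``gates''. Inside $\Delta(p(a),r_1)$ we keep the sheet $D_0$ with these slits removed.

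\textbf{Step 2: iteration.} At the exit of each corridor we attach an assembly $A_Q$ from \lemref{lem:assembly}. Inside it we repeat the procedure at a larger radius $r_2<R_*$, using further corridors placed on $\pi_Q^{-1}$ of a level curve, and so on with $r_k\uparrow R_*$. The gluing is along the analytic arcs $\tau,\xi$, and the incidence graph is a tree. By van Kampen the limit surface $Y$ is simply connected, and $\pi$ is locally biholomorphic, so $I$ is locally univalent.

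Along any ray $\theta$, the lift starting at $\Psi(0)$ either hits a slit or lateral side, or exits a corridor and continues. Because the radii are bounded by $R_*$ and each ray crosses the level curves $|p-p(a)|=r_k$ in order, the continuation must be blocked at some $R_\theta\le R_*$. Blocking happens either at a slit, where the curve goes to the boundary of $Y$, or in the limit $r_k\to R_*$. In either case $\gamma_\theta=\Psi^{-1}(\text{lift})$ leaves compacts of $\D$. Its projection stays in the compact union $K$ of the closures of the corridors together with $\overline{D_0}$. We arrange this by keeping all corridors and slits within a fixed compact set, for instance $\overline{p^{-1}\Delta(p(a),R_*)}\cap$ (a compact in $D_0$ after shrinking), so that $G'\ne0$ along $\gamma_\theta$.

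\textbf{Step 3: inner function and the set $U$.} By \lemref{lem:assembly}(iii) together with \lemref{lem:small-gates}, we choose the gates at stage $k$ with diameters $\delta_k$ so small that the harmonic measure, seen from any point of $\mathcal M_Q$, of escaping to the slits and corners is at most $\eps_k$. Summing, with $\sum\eps_k<\infty$, we conclude that for almost every $\zeta\in\T$ the curve $\Psi(r\zeta)$ eventually lies in a lifted collar over an arc of $\T$. Hence $|I^*|=1$ a.e., and $I$ is inner. We take $U$ to be the union of the countably many open arcs corresponding to lifted collars. On these arcs $I$ extends across by reflection with $|I|=1$, as in the proof of \lemref{lem:assembly}. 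Then $m(\T\setminus U)=0$.

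\textbf{Main obstacle.} The hardest step is the harmonic-measure bookkeeping through infinitely many generations. At each stage, the mass not yet absorbed into $B_Q$ must be bounded by a product of factors $\le \eps_k$, transported via \eqref{eq:assembly-comparison} through the separating sets $\mathcal M_Q$. I would first try an induction. Write $\omega_k$ for the harmonic measure of the stage-$k$ frontier, and show $\omega_{k+1}\le\eps_{k+1}\,\omega_k$ using the maximum principle on $Y$ restricted to the stage-$k$ subdomain, with the gate smallness coming from \lemref{lem:small-gates}.
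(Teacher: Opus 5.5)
There is a genuine gap in your blocking mechanism. You block rays at finite stages by slits removed from the sheet, and you say the lift ``goes to the boundary of $Y$'' there. A free slit bank is a nondegenerate analytic boundary arc of $Y$ lying over interior points of $\D$. By the boundary correspondence in the proof of \lemref{lem:assembly}, $\Psi^{-1}$ extends across such an arc and maps it onto an arc of $\T$ of positive length. On that arc $I^*$ takes values on the slit, so $|I^*|<1$ on a set of positive measure and $I$ is not inner. Making the slits small, or the $\eps_k$ summable, only makes this harmonic measure small, never zero. Indeed, seen from $\Psi(0)$, the first-stage slits already carry positive harmonic measure.

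Lateral sides do not block either. In the assemblies of \lemref{lem:assembly} they are glued to wings, so a lift that crosses one enters a wing. There it can follow other branches of $p^{-1}$, and its projection need not stay in any compact $K$.

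The paper never blocks along arcs. The corridors are polar rectangles $v\{z_*+re^{i\theta}:r_n<r<r_{n+1},\ \theta\in J\}$ over nested angular partitions $\mathcal P_n$. These cover all directions except finitely many omitted endpoints per generation, and the lateral sides are radial, so no ray enters a wing. A ray with $\theta$ outside the countable set $\Theta$ of partition endpoints passes through one corridor in every generation and escapes only as $t\to r_*$. A ray with $\theta\in\Theta$ runs into an omitted corner. Thus the only boundary of the surface over $\D$ consists of countably many corner points and the infinite-generation end.

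Correspondingly, your Step 3 must produce exact measure zero, and two ingredients are missing.

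\textbf{Generations.} You need the probability of reaching generation $N$ to tend to zero, with returns through thresholds handled. The paper uses the strong Markov property to get $q_n\le\frac14+\frac14q_n$, hence a bound $3^{1-N}$. Summability is not needed, but your proposed inequality $\omega_{k+1}\le\eps_{k+1}\omega_k$ must account for these returns.

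\textbf{Paths meeting finitely many generations.} You must show that such a path ends over $\T$. Applying \lemref{lem:assembly}(iii) assembly by assembly does not settle this by itself. In the glued surface, thresholds and exits are interior, and limits at corners must be excluded. The paper uses \lemref{lem:finite-level}: any limit of the projection lying in $\D$ belongs to the countable set $\calS$. Privalov's uniqueness theorem then gives $m\{I^*=\lambda\}=0$ for each $\lambda\in\calS$.
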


\subsection{The inductive construction}

Choose a Jordan domain $V\Subset\D$ containing $a$ such that $p$ is
one-to-one on a neighborhood of $\overline V$. Put $z_*=p(a)$, choose
$\rho>0$ with
\[
 \overline{\Delta(z_*,\rho)}\subset p(V),
\]
and let
\[
 v\colon\Delta(z_*,\rho)\longrightarrow V
\]
be the inverse branch of $p$. Fix radii
\begin{equation}\label{eq:radii}
 0<r_1<r_2<\cdots<r_n<\cdots\longrightarrow r_*<\rho
\end{equation}
and set
\begin{equation}\label{eq:K-definition}
 K=v\bigl(\overline{\Delta(z_*,r_*)}\bigr)\subset V.
\end{equation}

For each $n$, divide the angular circle $\mathbb R/(2\pi\mathbb Z)$
at finitely many distinct points, with at least two points in each
division, and let
$\mathcal P_n$ be the family of complementary open angular intervals.
Choose these divisions so that each interval of $\mathcal P_{n+1}$ is
contained in an interval of $\mathcal P_n$. For $J\in\mathcal P_n$, put
\[
 Q_{n,J}
 =v\{z_*+r e^{i\theta}:r_n<r<r_{n+1},\ \theta\in J\}.
\]
We call $Q_{n,J}$ a corridor of generation $n$. Its threshold and exit
are the arcs at radii $r_n$ and $r_{n+1}$. Put
\[
 s_n=\frac{r_n+r_{n+1}}2,
 \qquad
 \Gamma_n=v\{z_*+s_n e^{i\theta}:0\leq\theta<2\pi\}.
\]
The curve $\Gamma_n$ is a Jordan curve in $V$. Its intersection with
$Q_{n,J}$ is the middle crosscut of the corridor.

For fixed $n$, the derivatives of $v$ and $v^{-1}$ are bounded on
the relevant compact annulus. Hence there are constants $C_n,d_n>0$
such that, when $J$ has angular length $\delta$, the threshold and exit
of $Q_{n,J}$ have diameter at most $C_n\delta$, while their distance
from $\Gamma_n$ is at least $d_n$. By \lemref{lem:small-gates}, with
$M=\Gamma_n$, the mesh of $\mathcal P_n$ can be chosen so small that
\begin{equation}\label{eq:planar-three-quarters}
 \omega(w,\T,\D\setminus(\overline\tau\cup\overline\xi))
 >\frac34
 \qquad(w\in\Gamma_n)
\end{equation}
for every corridor of generation $n$. The partitions are chosen
successively so that refinement and \eqref{eq:planar-three-quarters}
hold at every generation.

Let $A_{n,J}$ be the assembly associated with $Q_{n,J}$ and $\Gamma_n$
by \lemref{lem:assembly}, and put
\[
 \mathcal M_{n,J}=\pi_{n,J}^{-1}(\Gamma_n),
\]
where $\pi_{n,J}$ is its projection. For each assembly fix a conformal
map $\Psi_{n,J}\colon\D\to A_{n,J}$ and let $B_{n,J}\subset\T$ be the set in
that lemma. Equations \eqref{eq:planar-three-quarters} and
\eqref{eq:assembly-comparison} give
\begin{equation}\label{eq:assembly-three-quarters}
 \omega\bigl(\Psi_{n,J}^{-1}(x),B_{n,J},\D\bigr)>\frac34
 \qquad(x\in\mathcal M_{n,J}).
\end{equation}
Let
\[
 C_0=v\bigl(\Delta(z_*,r_1)\bigr)
\]
be the central sheet. Attach the threshold of every first-generation
assembly to the corresponding open arc of $\partial C_0$. Inductively,
subdivide the exit of an assembly of generation $n$ according to
$\mathcal P_{n+1}$, and attach the threshold of the appropriate child
assembly to each open subarc. All endpoints of attaching arcs are omitted
from every incident piece, including the subdivision points on the parent
exits. Denote the resulting surface by $R$, and let
\[
 \pi\colon R\longrightarrow\D
\]
be the projection induced by the projections of the pieces.

\begin{lemma}\label{lem:tree-gluing}
The space $R$ is a Hausdorff, second-countable Riemann surface, $\pi$
is locally biholomorphic, and $R$ is simply connected. Every compact
subset of $R$ meets only finitely many generations.
\end{lemma}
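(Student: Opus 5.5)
We prove the four assertions in turn. Each one is reduced either to the single pieces, namely the central sheet $C_0$ and the assemblies $A_{n,J}$, each of which is a Riemann surface with $\pi$ locally biholomorphic by \lemref{lem:assembly}, or to the combinatorics of the gluing tree.

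\emph{Step 1: charts and local biholomorphy.} A point of $R$ is either interior to one piece, or lies on an open attaching arc, that is, on the exit of a parent (or on $\partial C_0$) identified with the threshold of a child. Interior points keep the charts of their piece. At a point $x$ of an open attaching arc, take small half-discs on the two sides, each mapped by the projection onto one side of the analytic arc in $\D$. The parent lies on the side $r<r_{n+1}$ and the child on the side $r>r_{n+1}$, in the coordinate $z_*+re^{i\theta}$ transported by $v$. Their union is mapped by $\pi$ bijectively onto a disc. We use this disc as the chart, so $\pi$ is the coordinate itself and is trivially locally biholomorphic. Transition maps are restrictions of $\pi$ and hence holomorphic. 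Endpoints of attaching arcs were omitted, so no corner points need charts.

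\emph{Step 2: Hausdorff property and second countability.} Take two distinct points. If they lie in the same piece or in adjacent pieces, separate them using the charts of Step~1, after shrinking the half-discs so that they lie in a single piece or a single gluing chart. If they lie in distinct non-adjacent pieces, use neighbourhoods contained in those pieces together with their collars. These neighbourhoods cannot meet, because each identification involves only one parent exit arc and one child threshold arc. The only delicate case is an attaching arc $\alpha$ against a point near an omitted endpoint, and here we check the following. Two sibling thresholds attached to adjacent subarcs of a parent exit meet only at an omitted endpoint. Hence they are not identified, and small half-discs about distinct points stay disjoint. For second countability, note that there are countably many pieces, since each $\mathcal P_n$ is finite. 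Each piece is second countable, because it is biholomorphic to $\D$ or to a disc. A countable union of open second-countable sets is second countable.

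\emph{Step 3: simple connectivity.} Write $R$ as the increasing union of the finite trees $R_N=C_0\cup\bigcup_{n\le N,J}A_{n,J}$, each with its gluing arcs. Every piece is simply connected: $C_0$ is a disc and each $A_{n,J}$ is simply connected by \lemref{lem:assembly}. Each gluing arc is a connected open arc, and the incidence graph is a tree. Van Kampen, applied inductively by adding one child at a time along a contractible collar, shows that each $R_N$ is simply connected. Every loop in $R$ is compact, so it lies in some $R_N$ by Step~4, and it is null-homotopic there.

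\emph{Step 4: compacta meet finitely many generations.} This is the step we expect to be the main obstacle. The pieces of generation $>N$ form an open set whose complement is $R_N$. We need compact sets not to accumulate through infinitely many generations. Let $K$ be compact in $R$. It is covered by finitely many chart neighbourhoods from Step~1, and each such neighbourhood lies in the union of at most two pieces. Hence $K$ meets only finitely many pieces, in particular finitely many generations. The point needing care is that the chart neighbourhoods of Step~1 really form an open cover of $R$ in the quotient topology. This follows because every point has been assigned such a neighbourhood and every point belongs to some piece, so no new limit points appear from the infinite gluing.
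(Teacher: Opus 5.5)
Your proof is correct. Simple connectivity is handled as in the paper (van Kampen on finite unions, then compactness of loops), but you argue differently in two places.

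\textbf{Hausdorff property.} The paper adjoins the attaching arcs to each piece and notes that they are closed, pairwise disjoint (because their endpoints are omitted), and finite in number in each piece. Hence the quotient map from the disjoint union is closed with finite fibres, which gives Hausdorffness at once. Your case-by-case separation uses the same input but needs more bookkeeping.

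\textbf{Finitely many generations.} The paper builds a continuous generation function $\ell\colon R\to[0,\infty)$, equal to $n-1$ near the threshold and $n$ near the exit of each assembly of generation $n$. It then concludes from the boundedness of $\ell$ on compacta. You use local finiteness instead: every point has a chart neighbourhood lying in at most two pieces, so a finite subcover of a compact set meets only finitely many pieces. Your route needs no auxiliary function and no compatible choices on disjoint collars, and it yields ``finitely many pieces'' directly. The paper's $\ell$ gives a global numerical bound on the generation in exchange for that small construction.

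\textbf{Two points to tighten.}
\begin{enumerate}
\item The chart neighbourhoods are open in $R$ because their preimage in each piece, with its arcs adjoined, is open. This is the precise content of your remark about ``no new limit points.''
\item For second countability, the interiors of the pieces do not cover the attaching arcs. You should add countably many chart neighbourhoods along each arc, as the paper does.
\end{enumerate}
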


\begin{proof}
The incidence graph of the pieces is a countable locally finite rooted
tree. Adjoin the attaching arcs to each piece before taking the quotient.
These arcs are closed and pairwise disjoint in each piece because their
endpoints are omitted. Only finitely many occur in each piece. Thus the
quotient map from the disjoint union of the pieces is closed and has
finite fibres, so the quotient is Hausdorff. The collars on opposite
sides of each attaching arc give a holomorphic chart through the
projection. Each piece and each attaching arc has a countable cover by
such charts. Since there are countably many pieces and attaching arcs,
$R$ is second-countable. Thus $R$ is a Riemann surface and $\pi$ is
locally biholomorphic.

To keep track of the generations, set $\ell=0$ on the central sheet. On
each assembly of generation $n$ choose a continuous function with values in
$[n-1,n]$, equal to $n-1$ on a collar of the threshold and to $n$ on
a collar of the exit. Such a function exists because the two closed
collars are disjoint. These functions agree on every attaching arc and
therefore define a continuous function $\ell\colon R\to[0,\infty)$.
Since $\ell$ is bounded on every compact subset of $R$, such a subset
meets only finitely many generations. Since every generation contains
finitely many assemblies, it meets only finitely many pieces.

A finite connected union of pieces is simply connected. Indeed, add the
pieces one at a time and apply van Kampen's theorem. Every new piece is
simply connected and is attached along a contractible collar. Every loop
in $R$ has compact image, hence lies in such a finite union. It is
therefore null-homotopic in $R$.
\end{proof}

The surface $R$ is noncompact, and $\pi$ is bounded and nonconstant.
So uniformization and Liouville's theorem give a conformal map
\begin{equation}\label{eq:uniformization}
 \Phi\colon\D\longrightarrow R.
\end{equation}
Let $x_0\in C_0$ be the point with $\pi(x_0)=a$, normalize
$\Phi(0)=x_0$, and define
\begin{equation}\label{eq:I-definition}
 I=\pi\circ\Phi.
\end{equation}
Both $\pi$ and $\Phi$ are locally biholomorphic. Thus $I$ is
locally univalent, $I(0)=a$, and $I'(0)\ne0$.

For $n\geq1$, let
\[
 \calM_n=\bigcup_{J\in\mathcal P_n}\mathcal M_{n,J}.
\]
Regard the central sheet as generation zero. The separation property in
\lemref{lem:assembly} implies that $\calM_n$ separates the generations
at most $n-1$ from the generations at least $n+1$. Since
$\mathcal M_{n,J}$ is the full inverse image of $\Gamma_n$ in its
assembly, this separation also holds for paths entering a wing.

Let $\calS\subset\D$ be the set of projections of all
omitted endpoints. Note that this set is countable.

\begin{lemma}\label{lem:finite-level}
Let $\alpha\colon[0,1)\to R$ be a continuous curve that leaves every
compact subset of $R$ as $t\to1$. Suppose that $\alpha$ meets only
finitely many generations and that
\[
 \pi(\alpha(t))\longrightarrow\lambda\in\D.
\]
Then $\lambda\in\calS$.
\end{lemma}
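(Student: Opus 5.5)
Suppose, for contradiction, that $\lambda\in\D\setminus\calS$. The plan is to copy the local argument from the proof of \lemref{lem:assembly}(iii), now on the whole surface $R$. Since $\alpha$ meets only finitely many generations, \lemref{lem:tree-gluing} shows that $\alpha$ lies in a finite connected union $R_N$ of pieces: the central sheet and the assemblies of generation at most $N$. This set contains only finitely many corridors, attaching arcs and omitted corners, and its image under $\pi$ is controlled. The idea is that near $\lambda$ each relevant component of $\pi^{-1}$ of a small disc is one-sheeted, so its points over a smaller closed disc form a compact set. This will contradict the assumption that $\alpha$ leaves every compact subset of $R$.

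First choose concentric discs $B'\Subset B\Subset\D\setminus\calS$ about $\lambda$. Take them so small that $B$ meets at most one of the finitely many analytic arcs forming the boundaries of corridors of generations $\leq N$ and of the central sheet, and meets it in a single subarc. This is possible because $\calS$ contains all corner projections and these arcs are finitely many. Let $t_0$ be such that $\pi(\alpha(t))\in B'$ for $t\geq t_0$. The tail $\alpha([t_0,1))$ is connected, so it lies in a single component $Y$ of $\pi^{-1}(B)\cap R_N$. Strictly one should work with the union of the generation-$\leq N$ pieces together with the adjoined exits of generation $N$, so that $Y$ is a genuine open set in $R$ up to a boundary arc.

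Next I will show that $\pi|_Y$ is injective onto its image in $B$. Inside a wing, $Y$ lies in one sheet of the universal covering of $\D\setminus\{b,c\}$ over the simply connected disc $B$, which avoids $b$ and $c$; hence it is one-sheeted, as in \lemref{lem:wing}. If $Y$ crosses a lateral side, the wing bank is glued to the corridor along that side, and the projection remains injective across it. If $Y$ crosses an attaching arc (a threshold glued to an exit or to $\partial C_0$), the two pieces project to opposite sides of that analytic arc, so $Y$ is again a single chart. In the remaining case $Y$ touches the free boundary formed by the exits of generation $N$. By the choice of $N$ the curve $\alpha$ does not enter generation $N+1$, so it stays on one side of that arc, in a collar whose closure in $R$ is compact. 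In every case $\overline{Y}\cap\pi^{-1}(\overline{B'})$ is compact in $R$, and it contains the tail of $\alpha$. This contradicts the hypothesis that $\alpha$ leaves every compact set, and therefore $\lambda\in\calS$.

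The main obstacle is the last case: making precise that at a generation-$N$ exit the relevant closure is compact in $R$. In $R$ these exits are attaching arcs to generation $N+1$ children, so they are interior points of $R$. I would handle this by taking the local chart across the arc, which includes a collar of the child. Then $Y$ is a component of $\pi^{-1}(B)$ in $R$ itself, built from finitely many one-sheeted pieces, and the same injectivity argument applies.
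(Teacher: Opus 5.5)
Your argument follows the paper's proof. You restrict to the finitely many pieces of generation at most $N$ and take a small disc $B$ about $\lambda$. You then place the tail of $\alpha$ in one component of the preimage of $B$, check that this component is built from finitely many one-sheeted pieces glued along analytic attaching arcs, and conclude that its part over $\overline{B'}$ has compact closure in $R$. Your injectivity claim is slightly stronger than the paper's ``at most one one-sheeted component from each piece'', and it is correct.

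One step, however, fails as written: the choice $B\Subset\D\setminus\calS$. The set $\calS$ is countable but not closed. Let $\theta$ be a division point of $\mathcal P_1$. By refinement it is a division point at every generation, so the corners $v(z_*+r_ne^{i\theta})$ all belong to $\calS$. Their limit $v(z_*+r_*e^{i\theta})$ does not, because every omitted endpoint projects to a point $v(z_*+r_ne^{i\phi})$ with $r_n<r_*$. A first-generation wing projects over such a limit point, so the hypotheses allow $\lambda$ to be one of them. In that case every disc about $\lambda$ meets $\calS$, and your $B$ does not exist.

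The repair is what the paper does. Because $\alpha$ stays in generations at most $N$, only a finite subset of $\calS$ matters. It consists of the projections of the corners of these pieces and of the subdivision points on their attaching arcs. This must include the points on the generation-$N$ exits where generation $N+1$ is attached. Since $\lambda\notin\calS$, a small disc avoids this finite set, and that is all your argument uses.

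The generation-$N$ subdivision points have to be in the list explicitly. They are interior points of the projected exit arcs, so your condition that $B$ meets a single analytic boundary arc does not exclude them. Your chart across a generation-$N$ exit into a child collar exists only away from them. Your remark that the arcs are finitely many shows you have the right finiteness in mind, but the set to be avoided must be this finite set, not all of $\calS$. With this change your proof is complete and agrees with the paper's.
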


\begin{proof}
Choose $N$ such that $\alpha$ meets no generation greater than $N$.
Let $Y$ be the union of the central sheet and all assemblies through
generation $N$, together with their attaching arcs. Let $S_Y$ be the set
of projections of their corners and
all subdivision endpoints on their attaching arcs, including those used
to attach generation $N+1$. This is a finite subset of $\calS$.

Suppose that $\lambda\notin\calS$. Choose concentric discs
$B'\Subset B\Subset\D\setminus S_Y$ about $\lambda$ so small that $B$
meets at most one side of each corridor, and the projection of every cut
bank or attaching arc meets $B$, if at all, in a single analytic arc.
For all large $t$, $\pi(\alpha(t))\in B'$, and the tail of $\alpha$ lies
in one component $Y_0$ of $Y\cap\pi^{-1}(B)$.

Over $B$, each covering piece is a disjoint union of one-sheeted
components. The corridor pieces are already one-sheeted. Each cut bank
or attaching arc over $B$ joins at most one such component from each side.
The finite tree of attachments and the choice of $B$ show that $Y_0$
contains at most one such component from each piece. Any boundary arc
of $Y_0$ that projects into $B$ is an analytic attaching arc of $R$. Adjoining these
arcs, or equivalently adjoining the corresponding collars in the finitely many
adjacent child assemblies, completes the local coordinates across them.
Since $\overline{B'}\subset B$, the set
$Y_0\cap\pi^{-1}(\overline{B'})$ has compact closure in $R$.
This closure contains the tail of $\alpha$, contrary to the assumption
that $\alpha$ leaves every compact subset of $R$.
\end{proof}

\subsection{Harmonic measure}

We regard Brownian motion on $R$ through the uniformization
\eqref{eq:uniformization}. Conformal maps preserve Brownian paths up to a
change of time and therefore preserve hitting probabilities. For
$n\geq1$, set
\[
 q_n=\sup_{x\in\calM_n}
 \mathbb P_x\{\text{the path reaches }\calM_{n+1}\}.
\]
Start at a point of $\calM_n$, and stop the path when, in its current
assembly, it first reaches the threshold, the exit, or the boundary
represented by $B_{n,J}$. The remaining boundary set has harmonic
measure zero by \lemref{lem:assembly}. From
\eqref{eq:assembly-three-quarters}, the total probability of reaching the
threshold or exit before $B_{n,J}$ is less than $1/4$. In particular,
each of those two events has probability at most $1/4$.

If the exit is reached first, the conditional probability of subsequently
hitting $\calM_{n+1}$ is at most one. If the threshold is reached first,
then every later path to $\calM_{n+1}$ must first return to $\calM_n$.
The strong Markov property gives
\begin{equation}\label{eq:qn-estimate}
 q_n\leq\frac14+\frac14q_n,
 \qquad q_n\leq\frac13.
\end{equation}
By applying the strong Markov property at the successive first hitting
times of $\calM_1,\ldots,\calM_N$, we obtain
\begin{equation}\label{eq:generation-probability}
 \mathbb P_{x_0}\{\text{the path reaches }\calM_N\}
 \leq 3^{1-N}.
\end{equation}
A continuous path from $x_0$ that meets infinitely many generations must
meet every $\calM_N$. Equation \eqref{eq:generation-probability}
therefore gives
\begin{equation}\label{eq:unbounded-level-null}
 \mathbb P_{x_0}\{\text{the path meets infinitely many generations}\}=0.
\end{equation}

We next prove that $I$ is inner. Let $Z_t$ be Brownian motion in
$\D$, started at $0$ and stopped at its first exit time $\tau$.
The bounded analytic function $I$ has radial limits $I^*$ almost
everywhere. By the Poisson integral formula and the martingale convergence
theorem, $I(Z_t)\to I^*(Z_\tau)$ almost surely as $t\to\tau$. See
\cite[Corollary~I.2.5 and Appendix~F]{GarnettMarshall2005}.
Put $X_t=\Phi(Z_t)$. Then $X_t$ leaves every compact subset of $R$ as
$t\to\tau$.

Suppose that the limit of $\pi(X_t)=I(Z_t)$ lies in $\D$. Outside
the null event in \eqref{eq:unbounded-level-null}, the path meets only
finitely many generations, and \lemref{lem:finite-level} shows that the
limit belongs to $\calS$. For
each $\lambda\in\calS$, Privalov's uniqueness theorem gives
\begin{equation}\label{eq:fixed-boundary-value-null}
 m\{\zeta\in\T:I^*(\zeta)=\lambda\}=0,
\end{equation}
since $I-\lambda$ is a nonzero bounded analytic function. As $\calS$
is countable, the probability that the boundary limit of $I$ lies in
$\D$ is zero. Therefore
\[
 |I^*(\zeta)|=1
 \quad\text{for almost every }\zeta\in\T,
\]
and $I$ is inner.

\subsection{Analytic continuation across boundary arcs}

Every corridor and every attaching arc projects into the compact set $K$
in \eqref{eq:K-definition}. Choose a compact set $K_1$ with
$K\Subset K_1\Subset\D$. Let $\zeta\in\T$ be a point at which
$I^*(\zeta)=\lambda\in\T$. Since
$I(r\zeta)\to\lambda$, the curve $\Phi(r\zeta)$ eventually projects
outside $K_1$. It can then cross neither an attaching arc nor a bank of
a cut, and consequently remains in one covering surface.

Choose an open arc $J$ of $\T$ containing $\lambda$ and a simply
connected Jordan domain $C_J\subset\D$ whose boundary contains $J$ and
whose closure in $\overline\D$ is disjoint from $K_1$.
For $r$ sufficiently close to one, $\Phi(r\zeta)$ lies in a component
$\widetilde C$ of the inverse image of $C_J$ in that covering surface.
By \lemref{lem:wing},
\[
 \pi\colon\widetilde C\longrightarrow C_J
\]
is biholomorphic and $\widetilde C$ has an analytic boundary arc over
$J$.

Put
\[
 g=\Phi^{-1}\circ(\pi|_{\widetilde C})^{-1}.
\]
The boundary argument in the proof of \lemref{lem:assembly} extends $g$
conformally across $J$. Since $I\circ g=\mathrm{id}$, its inverse extends
$I$ holomorphically across the open arc
$U_{\widetilde C}=g(J)\subset\T$, with $|I|=1$ there.

Choose the arcs $J$ from a countable basis of $\T$. There are
countably many covering surfaces, and every covering has countably many
components over a fixed collar. Hence the family of arcs
$U_{\widetilde C}$ obtained in this way is countable. Let $U$ be their
union. The set $U$ is open. Since $I$ is inner, $I^*(\zeta)\in\T$ for
almost every $\zeta\in\T$. For such a point $\zeta$, the preceding
argument fixes a covering surface containing the tail of
$\Phi(r\zeta)$. Choosing a basis arc $J$ around $I^*(\zeta)$ then yields
a component $\widetilde C$ for which $\zeta\in U_{\widetilde C}$.
Thus
\begin{equation}\label{eq:U-full-measure}
 m(\T\setminus U)=0.
\end{equation}

\subsection{Radial lifts}

Let $\Theta\subset[0,2\pi)$ be the countable set of representatives of
the endpoints of all the angular partitions. If $\theta\notin\Theta$,
there is a unique nested sequence of partition intervals
$J_n\in\mathcal P_n$ containing the class of $\theta$. The
lift from $x_0$ of
\begin{equation}\label{eq:radial-base-curve}
 t\longmapsto v(z_*+t e^{i\theta}),
 \qquad 0\leq t<r_*,
\end{equation}
passes through the corridor belonging to $J_n$ at every generation. By
\lemref{lem:tree-gluing}, it leaves every compact subset of $R$ as
$t\to r_*^-$.

If $\theta\in\Theta$, let $n$ be the first generation in which
$\theta$ is a partition endpoint. The same lift is defined for
$0\leq t<r_n$ and approaches, as $t\to r_n^-$, an omitted endpoint of
the attaching arcs. The local charts used in the gluing show that the
lift has no accumulation point in $R$, and hence leaves every compact
subset of $R$. Define
\[
 R_\theta=
 \begin{cases}
 r_*,&\theta\notin\Theta,\\
 r_n,&\theta\in\Theta\text{ first occurs at generation }n.
 \end{cases}
\]
Let $\widetilde\gamma_\theta$ be the lifted curve in $R$, and put
\[
 \gamma_\theta=\Phi^{-1}\circ\widetilde\gamma_\theta.
\]
Then \eqref{eq:blocking-lift} holds with $R_*=r_*$, and
\[
 I(\gamma_\theta(t))=v(z_*+t e^{i\theta})\in K.
\]
Since $p$ is one-to-one on $V$, $p'$ has no zeros there. Together
with the local univalence of $I$, this gives
\[
 G'(\gamma_\theta(t))
 =p'(I(\gamma_\theta(t)))I'(\gamma_\theta(t))\ne0.
\]
The curves $\gamma_\theta$ therefore give the asserted analytic
continuations of the local inverse of $G$. This completes the proof of
\propref{prop:blocking}.

\section{A holomorphic function with positive real part}\label{sec:barrier}

Let $A(\D)$ denote the disc algebra, which consists of the functions
that are holomorphic in $\D$ and continuous on $\overline\D$. By Fatou's
theorem, every closed subset of $\T$ of arclength measure zero is a peak set for
$A(\D)$ \cite[p.~393]{Fatou1906}. See also \cite[Section~3]{Noell2020}.

\begin{lemma}\label{lem:barrier}
Let $I\colon\D\to\D$ be holomorphic. Suppose that there is an open set
$U\subseteq\T$ with $m(\T\setminus U)=0$ such that $I$ extends
holomorphically across $U$ and $|I|=1$ on $U$. Then there is a function
$H\in\mathcal O(\D)$ that satisfies $\Real{H}>1$ and
\begin{equation}\label{eq:barrier-property}
 \Real{H(\zeta_j)}\longrightarrow+\infty
\end{equation}
whenever $|\zeta_j|\to1$ and the sequence $(I(\zeta_j))$ remains in a
compact subset of $\D$.
\end{lemma}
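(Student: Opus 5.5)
The plan is to build $H$ from a peak function for the closed null set $E=\T\setminus U$, composed with a logarithm. The logarithm is chosen so that the real part blows up whenever the peak function tends to $1$, whatever the direction of approach. The reason this is enough: the hypothesis on $I$ forces every boundary accumulation point of a sequence $(\zeta_j)$ with $(I(\zeta_j))$ in a compact subset of $\D$ to lie in $E$.

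\emph{Trivial case.} If $E=\emptyset$, then $I$ extends continuously to $\overline\D$ with $|I|=1$ on $\T$. Hence $|I(\zeta_j)|\to1$ whenever $|\zeta_j|\to1$, and no sequence of the kind described in \eqref{eq:barrier-property} exists. The constant $H\equiv2$ then works.

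\emph{Construction.} Assume $E\neq\emptyset$. Since $U$ is open, $E$ is a closed subset of $\T$, and $m(E)=0$. By Fatou's theorem quoted above, there is $f\in A(\D)$ with $f=1$ on $E$ and $|f|<1$ on $\overline\D\setminus E$. In particular $|f|<1$ on $\D$, so $1-f$ takes values in the open right half-plane. The principal branch $\log(1-f)$ is therefore holomorphic on $\D$. Define
\[
 H=2+\log2-\log(1-f).
\]
Then $\Real{H}=2+\log2-\log|1-f|$. Since $|1-f|<2$, we get $\Real{H}>2>1$.

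\emph{Blow-up.} Let $|\zeta_j|\to1$ with $I(\zeta_j)\in L$ for a compact $L\subset\D$. Suppose $\Real{H(\zeta_j)}\not\to+\infty$. Then there is a subsequence along which $\Real{H}$ stays bounded and $\zeta_j\to\zeta\in\T$.
\begin{enumerate}
\item If $\zeta\in U$: $I$ extends holomorphically, hence continuously, to a neighbourhood of $\zeta$ with $|I(\zeta)|=1$. So $|I(\zeta_j)|\to1$, contradicting $I(\zeta_j)\in L$.
\item If $\zeta\in E$: continuity of $f$ on $\overline\D$ gives $f(\zeta_j)\to1$. Then $-\log|1-f(\zeta_j)|\to+\infty$, contradicting the boundedness of $\Real{H}$ along the subsequence.
\end{enumerate}
This proves \eqref{eq:barrier-property}.

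\emph{Main point.} The only delicate step is choosing the function of $f$. The Cayley-type choice $(1+f)/(1-f)$ has real part $(1-|f|^2)/|1-f|^2$, which need not tend to infinity under tangential approach to $E$. Using $-\log(1-f)$ avoids this, since it blows up whenever $f\to1$ from any direction. Everything else is the compactness argument above, which relies only on the holomorphic extension of $I$ across $U$ and the existence of a peak function for $E$.
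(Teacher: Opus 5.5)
Your proposal is correct and follows the paper's proof almost step for step: the same Fatou peak function for $E=\T\setminus U$, the same trivial case $H\equiv2$, and the same compactness argument showing that the $\zeta_j$ can only accumulate on $E$. The only difference is the outer function. You use $2+\log2-\log(1-f)$, whose real part is $2+\log2-\log|1-f|$, while the paper uses $1+(1-\chi)^{-1/2}$, whose second term lies in the sector $|\arg|<\pi/4$; both have real part tending to $+\infty$ under unrestricted approach $f\to1$, which is exactly what your Cayley-transform remark says is needed.
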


\begin{proof}
Put
\[
 E=\T\setminus U.
\]
Then $E$ is closed and $m(E)=0$. If $E=\varnothing$, continuity on
$\overline\D$ and $|I|=1$ on $\T$ exclude the sequences in the assertion,
and we may take $H\equiv2$. Suppose that $E\ne\varnothing$. Fatou's
theorem gives a function $\chi\in A(\D)$ that satisfies
\[
 \chi=1\quad\text{on }E,
 \qquad |\chi|<1\quad\text{on }\overline\D\setminus E.
\]
Since $\Real{1-\chi}>0$ in $\D$, we may use the principal branch of the
square root and define
\begin{equation}\label{eq:single-peak-barrier}
 H=1+(1-\chi)^{-1/2}.
\end{equation}
Then
\[
 \left|\arg (1-\chi)^{-1/2}\right|<\frac\pi4,
 \qquad \Real{H}>1.
\]

Suppose that $|\zeta_j|\to1$ and that $I(\zeta_j)$ remains in a compact
set $L\subset\D$. We claim that
\begin{equation}\label{eq:distance-to-E}
 \dist(\zeta_j,E)\longrightarrow0.
\end{equation}
Otherwise, a subsequence would converge to a point
$\xi\in\T\setminus E=U$. Holomorphic extension across $U$ would give,
along this subsequence,
\[
 I(\zeta_j)\longrightarrow I(\xi),
 \qquad |I(\xi)|=1,
\]
contrary to $I(\zeta_j)\in L\subset\D$.

Uniform continuity of $\chi$ on $\overline\D$, together with
\eqref{eq:distance-to-E} and $\chi=1$ on $E$, gives
$\chi(\zeta_j)\to1$. The preceding bound on the argument therefore gives
$\Real{H(\zeta_j)}\to+\infty$.
\end{proof}

\section{Construction of a proper holomorphic immersion}\label{sec:completion}

\begin{lemma}\label{lem:explicit-proper-disc}
Let
\[
 \Omega=\C\setminus\{0,1\},
\]
and let $p\colon\D\to\Omega$ be a universal covering. Put
\begin{equation}\label{eq:explicit-second-coordinate}
 s(\zeta)=\frac{p'(\zeta)}{p(\zeta)(1-p(\zeta))}.
\end{equation}
Then
\begin{equation}\label{eq:explicit-proper-disc}
 P=(p,s)\colon\D\longrightarrow\C^2,
 \qquad P(\D)\subset(\C^*)^2,
\end{equation}
is proper. Both coordinate functions are zero-free, and
$p'(\zeta)\ne0$ for every $\zeta\in\D$.
\end{lemma}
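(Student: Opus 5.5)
The plan is to settle the elementary assertions first and then prove properness by looking at how $p$ can approach the boundary of $\Omega$ in the Riemann sphere.

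\emph{Elementary assertions.} Since $p$ is a covering map, it is locally biholomorphic, so $p'\ne0$ on $\D$. Since $p$ omits $0$ and $1$, the denominator $p(1-p)$ never vanishes, so $s$ is holomorphic and $s=p'/(p(1-p))$ is zero-free. Also $p$ itself omits $0$. Hence $P(\D)\subset(\C^*)^2$.

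\emph{Reduction for properness.} It suffices to show the following: if $|\zeta_j|\to1$ and $p(\zeta_j)$ stays bounded, then $|s(\zeta_j)|\to\infty$. Pass to a subsequence with $p(\zeta_j)\to\lambda\in\C$. If $\lambda\in\Omega$, take a small disc $D$ about $\lambda$ in $\Omega$. Its preimage under the covering is a disjoint union of sheets, each mapped biholomorphically onto $D$. Since $\zeta_j$ leaves compacts, $\zeta_j$ lies in sheets that exit every compact set. So we must compare $|p'|$ on different sheets. By Schwarz--Pick applied to the deck group, or directly, $|p'(\zeta)|(1-|\zeta|^2)^{-1}\cdot(1-|\zeta|^2)$ is given by the hyperbolic density: with $\rho_\Omega$ the hyperbolic density of $\Omega$ (curvature $-1$),
\[
\rho_\Omega(p(\zeta))\,|p'(\zeta)|=\frac{2}{1-|\zeta|^2}.
\]
Since $\rho_\Omega$ is bounded on compact subsets of $\Omega$, we get $|p'(\zeta_j)|\to\infty$ and hence $|s(\zeta_j)|\to\infty$.

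\emph{Main obstacle: $\lambda\in\{0,1\}$.} Here I use the same identity together with the known asymptotics of the hyperbolic density at the punctures:
\[
\rho_\Omega(w)\sim\frac{1}{|w|\log(1/|w|)}\quad(w\to0),
\]
and symmetrically at $1$. Near $0$ this gives
\[
|s(\zeta)|\asymp\frac{|p'(\zeta)|}{|p(\zeta)|}=\frac{2}{(1-|\zeta|^2)\,|p(\zeta)|\,\rho_\Omega(p(\zeta))}\asymp\frac{\log(1/|p(\zeta)|)}{1-|\zeta|^2}\to\infty,
\]
because both factors blow up. The case $\lambda=1$ is identical, using $1-p$. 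The only input needed is a lower bound $\rho_\Omega(w)\le C/(|w|\log(1/|w|))$ near each puncture, which is exactly the bound used above. This follows by comparison with the punctured disc: $\Omega\supset\Delta(0,1/2)\setminus\{0\}$, so by the monotonicity of hyperbolic metrics $\rho_\Omega\le\rho_{\Delta(0,1/2)^*}$, and the latter is explicit. Thus the standard Schwarz--Pick comparison suffices, and no sharp asymptotic is needed.

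\emph{Conclusion.} Combining the cases, every sequence leaving compact subsets of $\D$ has $|p|+|s|\to\infty$ along some subsequence argument applied to each subsequence. Hence $P$ is proper.
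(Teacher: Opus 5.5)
Your proof is correct and follows the paper's route. Both arguments rest on the covering identity $\lambda_\Omega(p(\zeta))|p'(\zeta)|=2/(1-|\zeta|^2)$ and on the comparison $\lambda_\Omega\le\lambda_{\{0<|z|<r\}}$ near the punctures. The paper packages this as one uniform bound $\sup_{|w|\le M}\lambda_\Omega(w)|w|\,|1-w|<\infty$, whereas you pass to a subsequence and split into the cases $\lambda\in\Omega$ and $\lambda\in\{0,1\}$.

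Two small points. The estimate you call a ``lower bound'' is an upper bound on $\rho_\Omega$. Also, you only need $|w|\rho_\Omega(w)$ to stay bounded near the punctures, not the extra logarithmic blow-up.
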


\begin{proof}
By the uniformization theorem, the universal cover of $\Omega$ is either
$\D$ or $\C$. The latter possibility is excluded by the little Picard
theorem, so a universal covering $p\colon\D\to\Omega$ exists. Since $p$
is locally biholomorphic, $p'$ has no zeros. The function $p$ omits $0$
and $1$, so $s$ is holomorphic and zero-free.

Let $\lambda_X$ denote the Poincar\'e density of curvature $-1$ on a
hyperbolic plane domain $X$ \cite{BeardonMinda2007}. Covering
invariance gives
\begin{equation}\label{eq:covering-metric-identity}
 \lambda_{\D}(\zeta)
 =\lambda_{\Omega}(p(\zeta))|p'(\zeta)|.
\end{equation}
For every $M>0$, we have
\begin{equation}\label{eq:CM-definition}
 C_M=
 \sup_{\substack{w\in\Omega\\ |w|\leq M}}
 \lambda_{\Omega}(w)|w|\,|1-w|<\infty.
\end{equation}
To prove this, choose $0<\delta<1/4$. The expression is bounded on the
compact set where $|w|\leq M$, $|w|\geq\delta$, and
$|1-w|\geq\delta$. If $0<|w|<\delta$, monotonicity and the inclusion
\[
 \{0<|z|<2\delta\}\subset\Omega
\]
give
\[
 \lambda_{\Omega}(w)
 \leq\lambda_{\{0<|z|<2\delta\}}(w)
 =\frac{1}{|w|\log(2\delta/|w|)}.
\]
The last identity follows from the exponential covering of the punctured
disc by a half-plane. Hence
\[
 \lambda_{\Omega}(w)|w|\,|1-w|
 \leq\frac{|1-w|}{\log2}
 \qquad(0<|w|<\delta).
\]
The same argument after translation by $1$ gives a uniform bound when
$0<|1-w|<\delta$. This proves \eqref{eq:CM-definition}.

Suppose that $|\zeta_j|\to1$ while $P(\zeta_j)$ remains bounded. Choose
$M,N>0$ such that
\[
 |p(\zeta_j)|\leq M,
 \qquad |s(\zeta_j)|\leq N.
\]
By \eqref{eq:explicit-second-coordinate},
\eqref{eq:covering-metric-identity}, and \eqref{eq:CM-definition},
\[
 \lambda_{\D}(\zeta_j)
 =\lambda_{\Omega}(p(\zeta_j))
  |p(\zeta_j)|\,|1-p(\zeta_j)|\,|s(\zeta_j)|
 \leq C_M N.
\]
This contradicts
\[
 \lambda_{\D}(\zeta)=\frac{2}{1-|\zeta|^2}
 \longrightarrow+\infty
 \qquad(|\zeta|\to1).
\]
Thus $P$ is proper.
\end{proof}

Now fix a universal covering $p\colon\D\to\Omega$ as in
\lemref{lem:explicit-proper-disc}, and let $s$ be defined by
\eqref{eq:explicit-second-coordinate}. Apply \propref{prop:blocking} with
this function $p$ and $a=0$. We obtain a locally univalent inner function
$I\colon\D\to\D$ with $I(0)=0$, a compact set $K\subset\D$, and an
open set $U\subset\T$ that satisfy the conclusions of that proposition.
By \lemref{lem:barrier}, there is a function $H\in\mathcal O(\D)$
that satisfies $\Real{H}>1$ and \eqref{eq:barrier-property}.

Put
\begin{equation}\label{eq:G-and-Q}
 G=p\circ I,
 \qquad Q=(s\circ I)H.
\end{equation}
For $\eps\in\C$ with $|\eps|<1$, define
\begin{equation}\label{eq:J-epsilon}
 J_{\eps}(\zeta)
 =\left(G(\zeta),Q(\zeta)(1+\eps\zeta)\right).
\end{equation}

\begin{proposition}\label{prop:J-proper}
For every $\eps\in\C$ with $|\eps|<1$, the map
\[
 J_{\eps}\colon\D\longrightarrow\C^2
\]
is a proper holomorphic immersion, and
$J_{\eps}(\D)\subset(\C^*)^2$.
\end{proposition}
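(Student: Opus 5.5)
We verify three properties of $J_\eps$ in turn: it takes values in $(\C^*)^2$, it is an immersion, and it is proper. Properness is the substantive part.

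\emph{Zero-freeness.} By \lemref{lem:explicit-proper-disc}, $p$ and $s$ are zero-free, so $G=p\circ I$ and $s\circ I$ are zero-free. Since $\Real{H}>1$, $H$ has no zeros. For $|\zeta|<1$ and $|\eps|<1$ we have $|1+\eps\zeta|\geq 1-|\eps\zeta|>0$. Hence both coordinates of $J_\eps$ are zero-free, and $J_\eps(\D)\subset(\C^*)^2$.

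\emph{Immersion.} It suffices that the first coordinate has nonvanishing derivative. We have $G'=(p'\circ I)\,I'$. Here $p'\neq0$ everywhere by \lemref{lem:explicit-proper-disc}, and $I'\neq0$ everywhere because $I$ is locally univalent by \propref{prop:blocking}. So $G'$ never vanishes and $J_\eps$ is an immersion.

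\emph{Properness.} Argue by contradiction. Take $|\zeta_j|\to1$ with $J_\eps(\zeta_j)$ bounded. Passing to a subsequence, $I(\zeta_j)\to\lambda\in\overline\D$. There are two cases.

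\begin{enumerate}
\item[(a)] $|\lambda|<1$. Then $I(\zeta_j)$ stays in a compact subset of $\D$, so $\Real{H(\zeta_j)}\to+\infty$ by \eqref{eq:barrier-property}. The set $\{I(\zeta_j)\}$ lies in a compact subset of $\D$, where $|s|$ is bounded below by a positive constant since $s$ is continuous and zero-free. Also $|1+\eps\zeta_j|\geq1-|\eps|>0$. Hence $|Q(\zeta_j)(1+\eps\zeta_j)|\geq c\,|H(\zeta_j)|\to\infty$, contradicting boundedness of the second coordinate.
\item[(b)] $|\lambda|=1$. Then $w_j=I(\zeta_j)\in\D$ with $|w_j|\to1$. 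Properness of $P=(p,s)$ from \lemref{lem:explicit-proper-disc} gives $|p(w_j)|+|s(w_j)|\to\infty$. If $p(w_j)=G(\zeta_j)$ is unbounded along a subsequence, we have a contradiction. Otherwise $|s(w_j)|\to\infty$. Since $|H|\geq\Real{H}>1$ and $|1+\eps\zeta_j|\geq1-|\eps|$, we get $|Q(\zeta_j)(1+\eps\zeta_j)|\geq(1-|\eps|)|s(w_j)|\to\infty$, again a contradiction.
\end{enumerate}

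The main point to check carefully is case (a). This is where the boundary behaviour of $I$ along the exceptional set $\T\setminus U$ must be controlled, and it is handled exactly by the barrier $H$ of \lemref{lem:barrier}. The remaining work is the observation that $s$ is bounded below on compact subsets of $\D$. The factor $(1+\eps\zeta)$ plays no role in properness beyond being bounded away from zero.
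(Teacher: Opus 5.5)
Your proof is correct and follows essentially the same route as the paper. Zero-freeness comes from $p$, $s$, $H$ and $|1+\eps\zeta|\geq1-|\eps|$, the immersion property from $G'=(p'\circ I)\,I'\neq0$, and properness from combining the properness of $P=(p,s)$ with the barrier property of $H$. The only difference is cosmetic: you split into cases according to the limit of $I(\zeta_j)$, whereas the paper first uses $|H|>1$ to bound $P(I(\zeta_j))$ and then deduces from the properness of $P$ that $I(\zeta_j)$ stays in a compact set, which is your case (b) in contrapositive form.
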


\begin{proof}
The functions $p$, $s$, and $H$ are zero-free, and
\[
 |1+\eps\zeta|\geq1-|\eps|>0
 \qquad(\zeta\in\D).
\]
Hence both coordinates of $J_{\eps}$ are zero-free. Since $p'$ and $I'$
have no zeros, we also have
\begin{equation}\label{eq:G-nowhere-critical}
 G'(\zeta)=p'(I(\zeta))I'(\zeta)\ne0
 \qquad(\zeta\in\D).
\end{equation}
Thus $J_{\eps}$ is an immersion.

To prove properness, suppose that $|\zeta_j|\to1$ while
$J_{\eps}(\zeta_j)$ remains bounded. Since
$|1+\eps\zeta_j|\geq1-|\eps|$, both $G(\zeta_j)$ and $Q(\zeta_j)$
remain bounded. The inequality $|H|\geq\Real{H}>1$ gives
\[
 |s(I(\zeta_j))|
 =\frac{|Q(\zeta_j)|}{|H(\zeta_j)|}
 \leq |Q(\zeta_j)|.
\]
Therefore
\[
 P(I(\zeta_j))
 =\bigl(p(I(\zeta_j)),s(I(\zeta_j))\bigr)
\]
remains bounded. Since $P$ is proper, the points $I(\zeta_j)$ lie in a
compact subset $L$ of $\D$. By \lemref{lem:barrier},
\[
 \Real{H(\zeta_j)}\longrightarrow+\infty.
\]
The zero-free function $s$ is bounded away from zero on $L$, and
$|1+\eps\zeta_j|\geq1-|\eps|$. Thus the modulus of the second
coordinate of $J_{\eps}(\zeta_j)$ tends to infinity, a contradiction.
\end{proof}

We now choose $\eps$ so that $J_{\eps}(0)$ has a unique preimage. By
\eqref{eq:G-nowhere-critical}, the fiber $G^{-1}(G(0))$ is discrete and
countable. For $\eta\ne0$ in this fiber, the equality
$J_{\eps}(\eta)=J_{\eps}(0)$ holds precisely when
\[
 Q(\eta)(1+\eps\eta)=Q(0),
\]
or equivalently, when
\begin{equation}\label{eq:epsilon-exceptional}
 \eps=\frac{Q(0)/Q(\eta)-1}{\eta}.
\end{equation}
There is at most one exceptional value for each such $\eta$. Choose
$\eps\in\D$ outside this countable set, and put
\begin{equation}\label{eq:J-and-basepoint}
 J=J_{\eps},
 \qquad (z_0,w_0)=J(0).
\end{equation}
Then $J\colon\D\to\C^2$ is a proper holomorphic immersion,
$J(\D)\subset(\C^*)^2$, and
\begin{equation}\label{eq:unique-base-fiber}
 J^{-1}(z_0,w_0)=\{0\}.
\end{equation}

\section{Proof of Theorem~\ref{thm:main}}\label{sec:main-proof}

\begin{proof}[Proof of Theorem~\ref{thm:main}]
Let
\[
 A=J(\D)\subset\C^2.
\]
By \propref{prop:J-proper} and Remmert's proper mapping theorem
\cite{Remmert1956}, $A$ is a closed analytic subset of $\C^2$. It is
irreducible because $\D$ is irreducible, and it has pure dimension one
because $J$ is a nonconstant immersion. Thus $A$, with its reduced
structure, is an effective Cartier divisor on $\C^2$.

The exponential sequence gives the exact segment
\[
 H^1(\C^2,\mathcal O)
 \longrightarrow H^1(\C^2,\mathcal O^*)
 \longrightarrow H^2(\C^2,\Z).
\]
By Cartan's Theorem~B, $H^1(\C^2,\mathcal O)=0$. Since $\C^2$ is
contractible, $H^2(\C^2,\Z)=0$. It follows that
\[
 H^1(\C^2,\mathcal O^*)=0,
\]
so every Cartier divisor on $\C^2$ is principal. Since the divisor
defined by $A$ is effective and reduced, there is a reduced entire
function $F\in\mathcal O(\C^2)$ that satisfies
\begin{equation}\label{eq:A-zero-set}
 A=\{(z,w)\in\C^2:F(z,w)=0\}.
\end{equation}

We next show that $F_w(z_0,w_0)\ne0$.
Since $J$ is an immersion, there is a disc $D_0\Subset\D$ about $0$ on
which $J$ is an embedding. A sufficiently small neighborhood $W$ of
$(z_0,w_0)$ satisfies $J^{-1}(W)\subset D_0$. Otherwise, there would be
points $\eta_j\in\D\setminus D_0$ with $J(\eta_j)\to(z_0,w_0)$.
Properness of $J$ would give a convergent subsequence
$\eta_j\to\eta\in\D\setminus D_0$. Continuity would then imply
$J(\eta)=(z_0,w_0)$, contrary to \eqref{eq:unique-base-fiber}. Hence $A$
agrees near $(z_0,w_0)$ with the embedded image of $D_0$ and is smooth
there.

The first component of $J$ is $G=p\circ I$, and
\begin{equation}\label{eq:G-prime-base}
 G'(0)=p'(0)I'(0)\ne0.
\end{equation}
Thus the tangent line to $A$ at $(z_0,w_0)$ has a nonzero $z$-component.
Since $F$ is reduced and $A$ is smooth at $(z_0,w_0)$, we have
$dF(z_0,w_0)\ne0$. The tangent line is the kernel of $dF=(F_z,F_w)$.
Consequently,
\begin{equation}\label{eq:Fw-nonzero}
 F_w(z_0,w_0)\ne0.
\end{equation}

Let $\varphi$ be the implicit germ at $z_0$ with $\varphi(z_0)=w_0$.
Since $G'(0)\ne0$, there is a local inverse $\sigma$ of $G$ with
$\sigma(z_0)=0$. The parametrization $J$ gives
\begin{equation}\label{eq:implicit-parametrization}
 \varphi(z)=Q(\sigma(z))\bigl(1+\eps\sigma(z)\bigr)
\end{equation}
near $z_0$.

Fix $\theta\in[0,2\pi)$. By \propref{prop:blocking}, there are
$0<R_\theta\leq R_*$ and a curve $\gamma_\theta$ that satisfies
\[
 G(\gamma_\theta(t))=z_0+t e^{i\theta},
 \qquad 0\leq t<R_\theta,
\]
with $G'(\gamma_\theta(t))\ne0$. The local inverse theorem and uniqueness
of analytic continuation show that
\[
 \sigma(z_0+t e^{i\theta})=\gamma_\theta(t)
\]
is the analytic continuation of the local inverse along the ray. As
$t\to R_\theta^-$, the point $\gamma_\theta(t)$ leaves every compact
subset of $\D$, while
\[
 I(\gamma_\theta(t))\in K\subset\D.
\]
Since $K$ is compact, \lemref{lem:barrier} gives
\begin{equation}\label{eq:H-along-ray}
 \Real{H(\gamma_\theta(t))}\longrightarrow+\infty.
\end{equation}

Put
\[
 c_K=\min_{\eta\in K}|s(\eta)|>0.
\]
By \eqref{eq:implicit-parametrization}, \eqref{eq:G-and-Q}, and
\eqref{eq:H-along-ray},
\begin{equation}\label{eq:implicit-blowup}
 |\varphi(z_0+t e^{i\theta})|
 \geq c_K(1-|\eps|)\Real{H(\gamma_\theta(t))}
 \longrightarrow+\infty
 \qquad(t\to R_\theta^-).
\end{equation}
A holomorphic continuation through $z_0+R_\theta e^{i\theta}$ would be
locally bounded there, contrary to \eqref{eq:implicit-blowup}. Since
$\theta$ was arbitrary, the theorem follows.
\end{proof}

\end{document}